\documentclass[11pt]{article}
\newcommand\version{October 10, 2026}

\usepackage[T1]{fontenc}
\usepackage{lmodern}
\usepackage{amsmath,amssymb,amsthm,mathtools}
\usepackage{microtype}
\usepackage[letterpaper,margin=1in]{geometry}
\usepackage{hyperref}
\hypersetup{colorlinks=true,linkcolor=black,citecolor=black,urlcolor=black,
  pdftitle={The sharp constant in the Chang--Marshall inequality},
  pdfauthor={Rupert L. Frank and Paata Ivanisvili}}
\allowdisplaybreaks
\newtheorem{theorem}{Theorem}
\newtheorem{proposition}[theorem]{Proposition}
\newtheorem{lemma}[theorem]{Lemma}
\newtheorem{corollary}[theorem]{Corollary}
\theoremstyle{remark}

\newcommand{\D}{\mathbb{D}}
\newcommand{\T}{\mathbb{T}}
\newcommand{\C}{\mathbb{C}}
\newcommand{\dd}{\,\mathrm{d}}
\DeclareMathOperator{\Rea}{Re}
\DeclareMathOperator{\Ima}{Im}

\title{The sharp constant in the\\Chang--Marshall inequality}

\author{Rupert L. Frank\thanks{Mathematisches Institut, Ludwig-Maximilians Universit\"at M\"unchen, Theresienstr.~39, 80333 M\"unchen, Germany; Munich Center for Quantum Science and Technology, Schellingstr.~4, 80799 M\"unchen, Germany. Email: \texttt{r.frank@lmu.de}.}
\and
Paata Ivanisvili\thanks{Department of Mathematics, University of California, Irvine, 510C Rowland Hall, Irvine, CA 92697-3875, USA. Email: \texttt{pivanisv@uci.edu}.}}

\date{\version}

\begin{document}

\maketitle

\begin{abstract}
We determine the optimal constant in the Chang--Marshall inequality, thereby proving a conjecture by Andreev and Matheson. More precisely, if $f$ is analytic in the unit disk, vanishes to order $m\geq1$ at the origin, and has normalized Dirichlet integral $D\leq1$, then the boundary mean of $\exp(|f|^2)$ is at most $\exp(D/m)$, with equality precisely for monomials.
\end{abstract}

\noindent\emph{2020 Mathematics Subject Classification.} Primary 30A10; Secondary 30H25, 46E35.

\noindent\emph{Key words and phrases.} Chang--Marshall inequality, Dirichlet space, sharp constant, isoperimetric inequality, radial monotonicity.


\section{Introduction and main results}

For an analytic function $f$ in the unit disk $\D:=\{z\in\C:|z|<1\}$, we set
\begin{equation}\label{eq:energy0}
    D(f):= \int_\D |f'(z)|^2 \, \frac{\dd A(z)}{\pi} \,,
\end{equation}
where $\dd A$ denotes ordinary planar area measure. 

If $D(f)<\infty$, then it is well known that $f$ has almost-everywhere radial limits on the boundary $\partial\D$. The Chang--Marshall inequality \cite{CM} implies exponential integrability of these boundary values with an optimal constant in the exponential. Our goal in this paper is to give a remarkably simple proof of this inequality. In addition, this proof gives the optimal constant on the right side of the inequality, which had been an open problem \cite{AM} for some time.

In fact, we will prove the following, stronger inequality.

\begin{theorem}\label{thm:main}
Let $m\geq 1$ and let $f\not\equiv0$ be analytic in $\D$, having a zero of order $m$ at the origin and satisfying $D(f)\leq 1$. Then
\begin{equation}\label{eq:main}
\int_0^{2\pi} \exp\bigl(|f(e^{i\theta})|^2\bigr) \,\frac{\dd\theta}{2\pi} \leq \exp\!\left(\frac{D(f)} m\right).
\end{equation}
with equality if and only if $f(z)=cz^m$ for some $c\in\C$ with $0<|c|\leq 1/\sqrt{m}$.
\end{theorem}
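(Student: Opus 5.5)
Expand the exponential in a power series and bound each moment separately:
\[
\int_0^{2\pi} e^{|f|^2}\frac{\dd\theta}{2\pi}=\sum_{k\ge0}\frac{1}{k!}\int_0^{2\pi}|f(e^{i\theta})|^{2k}\frac{\dd\theta}{2\pi}=\sum_{k\ge0}\frac{\|f^k\|_{H^2}^2}{k!}.
\]
The claimed bound $e^{D/m}=\sum_k D^k/(m^k k!)$ then suggests the termwise inequality
\begin{equation*}
\|f^k\|_{H^2}^2\le \frac{D(f)^k}{m^k}\qquad (k\ge1,\ D(f)\le 1).
\end{equation*}
This cannot hold literally for all $k$. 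By homogeneity it would say $\|f^k\|_{H^2}^2\le (D(f)/m)^k$ for every $f$, which is false: the Dirichlet space is not an algebra with norm $\sqrt D$. So the hypothesis $D\le1$ has to be used globally, and the argument must compare weighted sums rather than individual terms.

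The plan is therefore a monotonicity argument in the radius. Write $f_r(z)=f(rz)$ and set
\[
\Phi(r)=\int_0^{2\pi} e^{|f(re^{i\theta})|^2}\frac{\dd\theta}{2\pi},\qquad \Psi(r)=\exp\!\bigl(D(f_r)/m\bigr).
\]
Both equal $1$ at $r=0$ and both tend to their boundary values as $r\to1$. It suffices to show $\Phi'(r)\le\Psi'(r)\,\Phi(r)/\Psi(r)$, that is, that $\log\Phi-\log\Psi$ is nonincreasing.

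To compare the derivatives, use Green's formula for the subharmonic function $u=e^{|f|^2}$. We have $\Delta u=4|f'|^2(1+|f|^2)e^{|f|^2}$, so
\[
r\Phi'(r)=\int_{|z|<r}|f'|^2(1+|f|^2)e^{|f|^2}\frac{2\,\dd A}{\pi}\cdot\frac{1}{2}.
\]
Similarly $\frac{d}{dr}D(f_r)=\frac{1}{\pi}\int_{|z|=r}|f'|^2|dz|$ (with $D(f_r)=\int_{|z|<r}|f'|^2\dd A/\pi$). The needed inequality then becomes an isoperimetric-type statement on the disk $|z|<r$ relating $\int|f'|^2(1+|f|^2)e^{|f|^2}$ to the boundary energy $\int_{|z|=r}|f'|^2$ and to $\Phi$. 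Pushing forward $|f'|^2\dd A$ under $f$ (area with multiplicity) and using the distribution function $\mu(t)=|\{|f|>t\}|$, the classical isoperimetric inequality on the level sets of $|f|$ together with the vanishing order $m$ gives $\mu$-estimates of Moser type. This is where the factor $1/m$ enters: near the origin the winding number of $f$ is $m$, so each level curve is covered at least $m$ times.

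The main obstacle is the key differential inequality. My first attempt would be to prove the slightly stronger pointwise-in-$t$ bound comparing the area of $f(\{|z|<r\})$ counted with multiplicity and the length of its boundary, via the isoperimetric inequality $L^2\ge 4\pi m A$ for images covering the origin $m$ times. I would then integrate against $e^{t^2}$. The equality analysis follows from equality in the isoperimetric step, which forces the level sets to be circles at every radius, hence $f=cz^m$. The constraint $|c|\le 1/\sqrt m$ is exactly the condition $D(f)=m|c|^2\le1$.
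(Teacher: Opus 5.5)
There is a genuine gap: the monotonicity you reduce the theorem to is false, even for $m=1$ and $D(f)\le1$. Take $f(z)=az+bz^2$. Then $|f(re^{i\theta})|^2=|a|^2r^2+|b|^2r^4+2\Rea(\bar a b\,r^3e^{i\theta})$ and $D(f_r)=|a|^2r^2+2|b|^2r^4$. Hence, exactly,
\[
\log\Phi(r)-\log\Psi(r)=\log I_0\bigl(2|a||b|r^3\bigr)-|b|^2r^4,\qquad I_0(s)=\sum_{k\ge0}\frac{(s/2)^{2k}}{(k!)^2}.
\]
For small $|b|$ this is $|b|^2r^4(|a|^2r^2-1)+O(|b|^4)$. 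Its $r$-derivative, $|b|^2r^3(6|a|^2r^2-4)+O(|b|^4)$, is positive once $|a|^2r^2>2/3$. Concretely, take $|a|^2=0.9$ and $|b|^2=0.05$, so $D(f)=1$. The difference is about $-0.0090$ at $r=0.9$ and about $-0.0055$ at $r=1$. So $\Phi'/\Phi\le\Psi'/\Psi$ fails at radii where $D(f_r)\le1$, and no isoperimetric estimate can prove it. The inequality $\Phi\le\Psi$ does hold in this example, but not by monotonicity.

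Your proposed source of the factor $1/m$ also fails. Let $L=\int_\T|f'||dz|$ and $A=\int_\D|f'|^2\dd A$. The inequality $L^2\ge4\pi mA$ is Carleman's inequality when $m=1$. For $m\ge2$, take $f'=z^{m-1}(1+\epsilon z)$. Then $L^2=4\pi^2\bigl(1+\tfrac12|\epsilon|^2+O(|\epsilon|^4)\bigr)$, while $4\pi mA=4\pi^2\bigl(1+\tfrac{m}{m+1}|\epsilon|^2\bigr)$, which is larger. Finally, your sketch never says where $D\le1$ enters the differential inequality. It must enter, since the conclusion is false for $D>1$.

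The missing idea is to renormalize at every radius, so that you always work on the critical sphere $D=1$ and compare with the constant $e^{1/m}$. The paper shows that $r\mapsto J(g_r)$ is nonincreasing, where $g_r=f_r/\sqrt{D(f_r)}$, and that its limit as $r\downarrow0$ is $e^{1/m}$. One has $r\partial_rg_r=zg_r'-A_rg_r$ with $A_r=\int_\T|g_r'|^2\dd\mu$. So monotonicity reduces to
\[
\int_\T e^{|g|^2}\Rea(zg'\bar g)\dd\mu\le A_r\int_\T e^{|g|^2}|g|^2\dd\mu .
\]
This follows, via Cauchy--Schwarz, from the isoperimetric inequality applied not to $f(\D)$ or to level sets of $|f|$. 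It is applied instead to the auxiliary curve $\gamma=\exp(|w|^2/2+i\phi)$, where $w(\theta)=g(e^{i\theta})$ and $\phi'=\Ima(\bar ww')$. This curve satisfies $\gamma'=\gamma\bar ww'$. It closes up precisely because $\phi(2\pi)=2\pi D(g)=2\pi$, and this is exactly where the hypothesis $D=1$ is used. Its signed area is $\pi\int_\T e^{|g|^2}\Rea(zg'\bar g)\dd\mu$ and its length is $2\pi\int_\T e^{|g|^2/2}|g||g'|\dd\mu$. The equality cases then come from equality in Cauchy--Schwarz, namely $|g_r'|^2=C_r|g_r|^2e^{|g_r|^2}$ on $\T$ for all small $r$. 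A Fourier-coefficient argument finishes, rather than any claim that level sets are circles.
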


In particular, we obtain the following sharp form of the Chang--Marshall inequality.

\begin{corollary}\label{cor:sharp}
For every analytic function $f$ in $\D$ with $f(0)=0$ and $D(f)\leq1$,
\begin{equation}\label{eq:sharp}
    \int_0^{2\pi}\exp\bigl(|f(e^{i\theta})|^2\bigr) \, \frac{\dd\theta}{2\pi}\leq e.
\end{equation}
Equality holds if and only if $f(z)=e^{i\alpha}z$ for some $\alpha\in\mathbb{R}$.
\end{corollary}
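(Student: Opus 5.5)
Corollary~\ref{cor:sharp} follows directly from Theorem~\ref{thm:main}. Let $f$ be analytic in $\D$ with $f(0)=0$ and $D(f)\leq 1$.

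First, if $f\equiv 0$, the left side of \eqref{eq:sharp} equals $1<e$. So the inequality holds, and it is strict.

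Next, suppose $f\not\equiv0$. Since $f(0)=0$, the function $f$ has a zero of some finite order $m\geq1$ at the origin. Theorem~\ref{thm:main} then gives
\[
\int_0^{2\pi}\exp\bigl(|f(e^{i\theta})|^2\bigr)\,\frac{\dd\theta}{2\pi}\leq \exp\!\left(\frac{D(f)}{m}\right)\leq \exp(D(f))\leq e .
\]
The second inequality uses $m\geq1$ and $D(f)\geq0$. The third uses $D(f)\leq1$.

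For the equality case, suppose equality holds in \eqref{eq:sharp}. Then $f\not\equiv0$, and all three inequalities in the chain are equalities.

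1. Equality in the last step forces $D(f)=1$.
2. Equality in the middle step then gives $1/m=1$, so $m=1$.
3. Equality in the first step, by the equality clause of Theorem~\ref{thm:main}, forces $f(z)=cz$ with $0<|c|\leq1$.

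A direct computation gives
\[
D(cz)=|c|^2\int_\D\frac{\dd A}{\pi}=|c|^2 .
\]
So $D(f)=1$ gives $|c|=1$, that is, $c=e^{i\alpha}$ for some $\alpha\in\mathbb{R}$.

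Conversely, let $f(z)=e^{i\alpha}z$. Then $|f(e^{i\theta})|=1$, so the mean of $\exp(|f|^2)$ over the circle is exactly $e$. Also $D(f)=1$ and $f(0)=0$, so the hypotheses hold and equality is attained.

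There is no real obstacle here. The only care needed is to exclude the case $f\equiv0$ separately, since Theorem~\ref{thm:main} assumes $f\not\equiv0$, and to check that each inequality in the chain is sharp only under the stated conditions.
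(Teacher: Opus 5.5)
Your proof is correct and follows the paper's own argument: the paper applies Theorem~\ref{thm:main} to get $J(f)\leq e^{D(f)/m}\leq e$, notes that equality forces $D(f)=m=1$, and treats $f\equiv0$ separately. You fill in the same steps in slightly more detail, splitting the middle inequality into two and checking $|c|=1$ explicitly.
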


Indeed, for $f\not\equiv0$, Theorem~\ref{thm:main} gives $J(f)\leq e^{D(f)/m}\leq e$. Equality in the latter inequality requires $D(f)=m=1$, and the equality statement follows. The case $f\equiv 0$ is trivial.


\subsection*{The Chang--Marshall inequality and Andreev--Matheson problem}

Beurling \cite{Beu} showed that the supremum 
\begin{equation}
    \label{eq:supremum}
    \sup\left\{ \int_0^{2\pi}\exp\bigl(\alpha |f(e^{i\theta})|^2\bigr) \, \frac{\dd\theta}{2\pi} :\ f \ \text{analytic in $\D$ with $f(0)=0$ and $D(f)\leq1$} \right\}
\end{equation}
is finite if $\alpha<1$ and infinite if $\alpha>1$. The question whether the supremum in \eqref{eq:supremum} remains finite in the critical case $\alpha=1$ was raised by Moser \cite{Mo}. He was able to answer a similar, but different question concerning Trudinger's result about the exponential integrability of Sobolev functions.

Moser's question was finally settled by Chang and Marshall~\cite{CM}, who proved that the supremum is finite for $\alpha=1$. An alternative proof of this fact was given by Marshall~\cite{M} and an extension appears in \cite{Es}. For background on the Chang--Marshall inequality, we refer to \cite[Section 9.2]{ARSW}.

However, the value of the supremum \eqref{eq:supremum} with $\alpha=1$ remained unknown. Andreev and Matheson~\cite{AM} showed that \(f(z)=z\) is a local maximizer and conjectured that it is also a global maximizer. Subsequent work established extremality in restricted classes \cite{CiMa,An} and investigated regularity and existence in related extremal problems \cite{MP}. In contrast to the the famous Carleson--Chang result~\cite{CC} concerning Moser's inequality for Sobolev functions, even the existence of a maximizer for the supremum \eqref{eq:supremum} with $\alpha=1$ has remained open so far.

Our Corollary~\ref{cor:sharp} proves the conjecture of Andreev and Matheson~\cite{AM} and identifies all maximizers. Theorem~\ref{thm:main} gives a stronger inequality that also accounts for the order of vanishing and value of the Dirichlet integral.


\subsection*{Strategy of the proof}

Let us abbreviate
\begin{equation}
    \label{eq:defj}
    J(f):= \int_0^{2\pi} \exp\bigl(|f(e^{i\theta})|^2\bigr) \,\frac{\dd\theta}{2\pi} \,.
\end{equation}
The key idea of our proof of Theorem \ref{thm:main} is to show that the function
\begin{equation}
    \label{eq:monointro}
     r\longmapsto J\!\left(\frac{f(r\,\cdot)}{\sqrt{D(f(r\,\cdot))}}\right)
\end{equation}
is nonincreasing on the interval $(0,1)$. Comparing the limiting value at $r=0$, which is easy to compute, and the limiting value at $r=1$, which is the desired $J(f)$ when $D(f)=1$, we immediately deduce the claimed inequality in Theorem \ref{thm:main} when $D(f)=1$. The inequality for $D(f)<1$ follows easily from the one for $D(f)=1$.

It turns out that the monotonicity of \eqref{eq:monointro} is a consequence of the isoperimetric inequality for an auxiliary closed, smooth curve. The fact that the curve is not necessarily simple does not create any problems, provided we understand the `enclosed area' in the isoperimetric inequality in the sense of the algebraic signed area.

We recall this version of the isoperimetric inequality and use it to prove the key lemma in Section~\ref{sec:geometry}. In Section \ref{sec:radial}, we deduce the monotonicity of \eqref{eq:monointro} from this key lemma and discuss strictness of the monotonicity. Section~\ref{sec:bound} establishes Theorem \ref{thm:main}.


\section{An auxiliary closed curve}\label{sec:geometry}

We first recall the form of the isoperimetric inequality that we shall use. For a closed continuously differentiable planar curve $\gamma$, we let
\begin{align*}
 L_\gamma &:=\int_\gamma \dd s  
 \qquad\text{and}\qquad
 \mathcal A_\gamma
 :=\frac12\int_\gamma \Ima(\overline{z} \tau)\dd s
\end{align*}
denote its length and algebraic signed area, respectively. Here $\dd s$ is the arclength element and $\tau$ is the tangent vector of the curve, identified with a complex number. Note that $\tau$ is defined $\dd s$-almost everywhere. In terms of a continuously differentiable parametrization $\eta:[0,T]\to\C$ of $\gamma$ we have $\dd s = |\eta'(t)|\dd t$ and $\tau = \eta'(t)/|\eta'(t)|$, so
\begin{align*}
 L_\gamma =\int_0^T |\eta'(t)| \dd t  
 \qquad\text{and}\qquad
 \mathcal A_\gamma=\frac12\int_0^T \Ima(\overline{\eta(t)} \eta'(t))\dd t \,.
\end{align*}
The signed area formulation is useful because our curve need not be simple.

\begin{lemma}\label{lem:iso}
Let $\gamma$ be a closed continuously differentiable planar curve. Then
\begin{equation}\label{eq:iso}
 4\pi|\mathcal A_\gamma|\leq L_\gamma^2.
\end{equation}
\end{lemma}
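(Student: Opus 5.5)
We prove Lemma~\ref{lem:iso} by the classical Fourier-series (Hurwitz) argument, which needs no simplicity of the curve. First we discard the degenerate case $L_\gamma=0$: then the curve is constant, so $\mathcal A_\gamma=0$ and \eqref{eq:iso} holds trivially. Otherwise we reparametrize by a multiple of arclength, so that $\eta:[0,2\pi]\to\C$ is closed with constant speed $|\eta'(t)|=L_\gamma/(2\pi)$ almost everywhere. Both $L_\gamma$ and $\mathcal A_\gamma$ are invariant under orientation-preserving reparametrization; this follows from the change of variables formula. The arclength reparametrization of a $C^1$ curve is only Lipschitz where $\eta'$ vanishes, but Lipschitz regularity is enough for everything below. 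Here $\eta$ is absolutely continuous and periodic, with $\eta'\in L^\infty\subset L^2$.

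Next we expand $\eta(t)=\sum_{k\in\mathbb Z}c_k e^{ikt}$. Since $\eta$ is absolutely continuous and periodic, $\eta'$ has Fourier coefficients $ikc_k$. Parseval then gives
\begin{equation*}
\frac{L_\gamma^2}{2\pi}=\int_0^{2\pi}|\eta'|^2\dd t=2\pi\sum_k k^2|c_k|^2 .
\end{equation*}
For the area we write $\int_0^{2\pi}\overline{\eta}\,\eta'\dd t=2\pi\sum_k ik|c_k|^2$, again by Parseval, now in its polarized form for the pair $\eta,\eta'\in L^2$. Taking imaginary parts,
\begin{equation*}
\mathcal A_\gamma=\pi\sum_k k|c_k|^2 .
\end{equation*}

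Finally, since $|k|\le k^2$ for every integer $k$,
\begin{equation*}
4\pi|\mathcal A_\gamma|\le 4\pi^2\sum_k |k|\,|c_k|^2\le 4\pi^2\sum_k k^2|c_k|^2=L_\gamma^2 .
\end{equation*}
This is \eqref{eq:iso}, and the absolute value comes for free, so orientation plays no role. The only point needing care is the reparametrization step: one must check that the signed area is unchanged, for instance by expressing it as $\frac12\int\Ima(\bar z\,\dd z)$, which is parametrization-invariant for orientation-preserving absolutely continuous changes of variable. If one prefers to avoid reparametrization altogether, Cauchy--Schwarz gives $L_\gamma^2\le 2\pi\int|\eta'|^2$ for a parametrization on $[0,2\pi]$, and the Fourier argument then applies to the original $C^1$ parametrization directly. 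This alternative is cleaner, and it is the one I would actually write down.
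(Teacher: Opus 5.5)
Your main argument is correct and is essentially the paper's proof. The paper also passes to an arclength parametrization. It then translates to mean zero and applies Cauchy--Schwarz together with Wirtinger's inequality, which is a packaged form of your termwise comparison of $\pi\sum_k k|c_k|^2$ with $\pi\sum_k k^2|c_k|^2$. Your Hurwitz version needs no translation, since the $k=0$ mode contributes to neither side.

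The ``cleaner alternative'' in your last paragraph, however, is not valid, and you say it is the version you would actually write down. Take a parametrization $\eta$ of $\gamma$ on $[0,2\pi]$ that does not have constant speed. The Fourier computation then gives
\[
4\pi|\mathcal A_\gamma|\le 4\pi^2\sum_k k^2|c_k|^2=2\pi\int_0^{2\pi}|\eta'|^2\dd t ,
\]
while Cauchy--Schwarz gives $L_\gamma^2\le 2\pi\int_0^{2\pi}|\eta'|^2\dd t$. These are two upper bounds by the same quantity, so together they say nothing about $4\pi|\mathcal A_\gamma|$ versus $L_\gamma^2$.

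What you actually need is the reverse inequality $2\pi\int_0^{2\pi}|\eta'|^2\dd t\le L_\gamma^2$. Cauchy--Schwarz always gives $\geq$ here, so this holds exactly when $|\eta'|$ is constant almost everywhere. The constant-speed reparametrization is therefore not an optional convenience. It is precisely the step that turns the energy bound $4\pi|\mathcal A_\gamma|\le 2\pi\int|\eta'|^2$ into the length bound.

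One could rescue the idea by noting that the energy bound holds for every parametrization and that its infimum over reparametrizations is $L_\gamma^2$. That brings the reparametrization back, so nothing is gained. Write down your first version, with the invariance of $L_\gamma$ and $\mathcal A_\gamma$ under the Lipschitz reparametrization justified as you indicate.
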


We recall the standard argument that appears, for example, in \cite[Subsection I.2.3]{Ch}. We stress that this argument allows both stationary points in the original parametrization and self-intersections.

\begin{proof}
If $L_\gamma=0$, there is nothing to prove. Otherwise, we take a Lipschitz continuous arclength parametrization $\eta:[0,L_\gamma]\to\C$. After translating $\gamma$ if necessary, which does not change $\mathcal A_\gamma$ and $L_\gamma$, we may assume that $\int_0^{L_\gamma} \eta(t)\dd t=0$. Cauchy--Schwarz and the periodic Wirtinger inequality give
\[
 |\mathcal A_\gamma|
 \leq\frac12\|\eta\|_{L^2(0,L_\gamma)}\|\eta'\|_{L^2(0,L_\gamma)}
 \leq\frac{L_\gamma}{4\pi}\|\eta'\|_{L^2(0,L_\gamma)}^2
 =\frac{L_\gamma^2}{4\pi}.
\]
This gives the claimed inequality.
\end{proof}

In what follows, we write $\T:=\partial\D$ and let $\mu$ be normalized arclength measure on $\T$.

\begin{lemma}\label{lem:geometric}
Suppose that $f$ is analytic in a neighborhood of $\overline\D$, with $f(0)=0$ and $D(f)=1$. Then
\begin{equation}\label{eq:geometric}
 \int_\T e^{|f|^2}\Rea(zf'\overline f) \dd\mu \leq
 \left(\int_\T e^{|f|^2/2} |f'||f| \dd\mu \right)^2.
\end{equation}
\end{lemma}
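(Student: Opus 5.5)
The plan is to apply the isoperimetric inequality of Lemma~\ref{lem:iso} to a closed curve $\eta:[0,2\pi]\to\C$ of the form
\[
 \eta(\theta)=e^{|f(e^{i\theta})|^2/2}\,e^{i\psi(\theta)} ,
\]
with $\psi$ real-valued and chosen so that the two sides of \eqref{eq:geometric} become the signed area and the length of $\eta$. Throughout I write $z=e^{i\theta}$. Since $f$ is analytic in a neighborhood of $\overline\D$, the function $\theta\mapsto f(e^{i\theta})$ is smooth, and its $\theta$-derivative is $izf'(z)$. I would take $\psi$ with
\[
 \psi'(\theta)=\Rea\bigl(z f'(z)\overline{f(z)}\bigr),\qquad \psi(0)=0 .
\]

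The first step is to check that $\eta$ is closed. The modulus $e^{|f|^2/2}$ is $2\pi$-periodic, so it suffices that $\psi(2\pi)\in 2\pi\mathbb Z$. We have $\Rea(zf'\overline f)=\tfrac12\,r\partial_r|f|^2$ at $r=1$. By the divergence theorem and $\Delta|f|^2=4|f'|^2$,
\[
 \int_\T \Rea(zf'\overline f)\dd\mu=\frac{1}{4\pi}\int_\D\Delta|f|^2\dd A=D(f)=1 .
\]
Hence $\psi(2\pi)=2\pi$, and $\eta$ is a smooth closed curve that winds once. This is exactly where the hypothesis $D(f)=1$ enters. The hypothesis $f(0)=0$ is not really needed here, but it is harmless.

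The second step is the length computation. First,
\[
 \partial_\theta |f|^2=2\Rea\bigl(\overline f\, izf'\bigr)=-2\Ima\bigl(zf'\overline f\bigr).
\]
Differentiating $\eta$ then gives
\[
 \eta'=e^{|f|^2/2}e^{i\psi}\Bigl(-\Ima(zf'\overline f)+i\Rea(zf'\overline f)\Bigr),
\]
so that
\[
 |\eta'|=e^{|f|^2/2}\,|zf'\overline f|=e^{|f|^2/2}|f'||f|
\]
on $\T$. Consequently $L_\eta=2\pi\int_\T e^{|f|^2/2}|f'||f|\dd\mu$.

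The third step is the area computation. In $\overline\eta\,\eta'$ the phases cancel, so
\[
 \Ima(\overline\eta\,\eta')=e^{|f|^2}\psi'=e^{|f|^2}\Rea(zf'\overline f),
\]
and therefore $\mathcal A_\eta=\pi\int_\T e^{|f|^2}\Rea(zf'\overline f)\dd\mu$. Lemma~\ref{lem:iso} then yields $4\pi\cdot\pi\cdot\mathrm{LHS}\le 4\pi^2\,\mathrm{RHS}$, where LHS and RHS are the two sides of \eqref{eq:geometric}. After cancelling $4\pi^2$ this is exactly \eqref{eq:geometric}. Note that we only use $\mathcal A_\eta\le|\mathcal A_\eta|$, so no sign information on the left side is required.

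The main obstacle is finding the right curve. A first attempt would be $G\circ f$ for a radial map $G$ of the plane, but then the area is $\int_\D J_G(f)|f'|^2\dd A$, and no choice of $G$ makes both the length and the area match. What unlocks the argument is prescribing the angular velocity $\psi'=\Rea(zf'\overline f)$. This choice completes $\partial_\theta|f|^2/2$ to the full modulus $|zf'\overline f|$ in the length. It also turns the area integrand into exactly $e^{|f|^2}\Rea(zf'\overline f)$. The only remaining issue is closedness of $\eta$, and the normalization $D(f)=1$ guarantees it. As a sanity check, for $f(z)=z$ the curve $\eta$ is the circle of radius $e^{1/2}$, and both sides of \eqref{eq:geometric} equal $e$.
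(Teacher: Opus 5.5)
Your proposal is correct and is essentially the paper's proof. Your curve $\eta$ coincides with the paper's $\gamma$, since $\Ima(\overline{w}w')=\Rea(zf'\overline f)$ for $w(\theta)=f(e^{i\theta})$, and the length and signed-area computations followed by Lemma~\ref{lem:iso} are the same; the only difference is that you verify closedness, $\psi(2\pi)=2\pi D(f)$, via the divergence theorem and $\Delta|f|^2=4|f'|^2$, whereas the paper uses Parseval's identity.
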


\begin{proof}
We set $w(\theta):=f(e^{i\theta})$ and
\begin{equation}\label{eq:phase}
 \phi(\theta) :=\int_0^\theta
       \Ima\bigl(\overline{w(\theta')}w'(\theta')\bigr)\dd \theta' \,.
\end{equation}
Since $w'(\theta)=ie^{i\theta}f'(e^{i\theta})$, Parseval's identity gives
\[
 \phi(2\pi) =2\pi\sum_{n\geq1}n|a_n|^2=2\pi\, D(f) = 2\pi \,.
\]
This, together with $\phi(0)=0$, shows that
\begin{equation}\label{eq:curve}
 \gamma(\theta):=\exp\!\left(\frac{|w(\theta)|^2}2+i\phi(\theta)\right)
 \qquad\text{for all}\ 0\leq\theta\leq2\pi \,,
\end{equation}
defines a closed smooth curve in the complex plane. Moreover,
\begin{equation}\label{eq:curve-derivative}
 \gamma'=\gamma\left(\frac{(|w|^2)'}2+i\phi'\right)
          =\gamma\,\overline w\,w' \,,
\end{equation}
because $(|w|^2)'/2=\Rea(\overline w w')$ and $\phi'=\Ima(\overline w w')$. The length and signed area of $\gamma$ are therefore
\begin{align}
 L_\gamma &=\int_0^{2\pi} |\gamma'| \dd\theta =
 \int_0^{2\pi}e^{|w|^2/2}|w||w'|\dd\theta 
 = 2\pi \int_\T e^{|f|^2/2}|f||f'|\dd\mu \,,
 \label{eq:length}\\
 \mathcal A_\gamma
 &=\frac12\int_0^{2\pi}\Ima(\overline\gamma\gamma')\dd\theta
 =\frac12\int_0^{2\pi}e^{|w|^2} \Ima(\overline w w')\dd\theta 
 = \pi \int_\T e^{|f|^2}\Rea(zf'\overline f) \dd\mu \,.\label{eq:area}
\end{align}
Applying Lemma~\ref{lem:iso} gives
\[
\int_\T e^{|f|^2}\Rea(zf'\overline f) \dd\mu
= \frac1\pi \mathcal A_\gamma
\leq \frac1\pi |\mathcal A_\gamma|\leq\frac{L_\gamma^2}{4\pi^2}
 =\left(\int_\T e^{|f|^2/2}|f||f'|\dd\mu\right)^2,
\]
which is the claimed inequality.
\end{proof}


\section{Normalized radial monotonicity}\label{sec:radial}

We recall the definition of $J(f)$ from \eqref{eq:defj}.

\begin{proposition}\label{prop:monotonicity}
Let $f\not\equiv0$ be analytic in $\D$ with $f(0)=0$. Then
\[
r \mapsto J \Bigl( \frac{f(r\,\cdot)}{\sqrt{D(f(r\,\cdot))}} \Bigr)
\]
is nonincreasing on the interval $(0,1)$ and
\[
\lim_{r\downarrow0} J \Bigl( \frac{f(r\,\cdot)}{\sqrt{D(f(r\,\cdot))}} \Bigr) = e^{1/m} \,,
\]
where $m$ is the order of the zero of $f$ at the origin. In particular,
\begin{equation}\label{eq:radial-bound}
 J\Bigl( \frac{f(r\,\cdot)}{\sqrt{D(f(r\,\cdot))}} \Bigr)\leq e^{1/m}\qquad\text{for all}\ 0<r<1 \,.
\end{equation}
\end{proposition}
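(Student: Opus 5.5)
The plan is to differentiate $F(r):=J(g_r)$ in $r$, where $g_r:=f(r\,\cdot)/\sqrt{D(f(r\,\cdot))}$. I will show that $rF'(r)$ is bounded above by a difference that Lemma~\ref{lem:geometric} together with Cauchy--Schwarz makes nonpositive. Write $f(z)=\sum_{n\geq m}a_nz^n$ with $a_m\neq0$, $f_r:=f(r\,\cdot)$, and $D(r):=D(f_r)=\sum_n n|a_n|^2r^{2n}$, which is positive for $r>0$. For each fixed $r<1$ the function $g_r$ is analytic in a neighborhood of $\overline\D$ and satisfies $g_r(0)=0$ and $D(g_r)=1$. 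Hence $g_r$ is admissible in Lemma~\ref{lem:geometric}. Moreover, $F(r)=\int_\T \exp(|f_r|^2/D(r))\dd\mu$ is smooth in $r\in(0,1)$, since everything is real-analytic on compact subsets, so differentiation under the integral is justified.

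\emph{Derivative computation.} Since $r\partial_r f_r = z f_r'$, we have $r\partial_r|f_r|^2 = 2\Rea(zf_r'\overline{f_r})$. Dividing by $D(r)$ and differentiating gives
\[
 rF'(r)=\int_\T e^{|g_r|^2}\Bigl(2\Rea(zg_r'\overline{g_r}) - \frac{rD'(r)}{D(r)}|g_r|^2\Bigr)\dd\mu .
\]
Now write $g_r=\sum b_nz^n$ with $b_n=a_nr^n/\sqrt{D(r)}$. Then $rD'(r)/D(r)=2\sum n^2|b_n|^2=2\int_\T|g_r'|^2\dd\mu$. So it suffices to show
\[
 \int_\T e^{|g|^2}\Rea(zg'\overline g)\dd\mu\leq \int_\T|g'|^2\dd\mu\int_\T e^{|g|^2}|g|^2\dd\mu
\]
for $g=g_r$.

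\emph{Main step.} This is where Lemma~\ref{lem:geometric} enters, applied to $g$. It bounds the left side by $\bigl(\int_\T e^{|g|^2/2}|g||g'|\dd\mu\bigr)^2$. Cauchy--Schwarz, with the factors split as $|g'|$ and $e^{|g|^2/2}|g|$, then bounds this by exactly the right side. Hence $F'\leq0$, and the monotonicity follows. This step is essentially immediate given the lemma; the only point needing care is matching the normalization of $rD'/D$ with $\int_\T |g'|^2\dd\mu$ via Parseval.

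\emph{Limit at $0$.} We have
\[
 g_r(z)=\frac{\sum_{n\geq m}a_nr^{n-m}z^n}{\bigl(\sum_{n\geq m}n|a_n|^2r^{2(n-m)}\bigr)^{1/2}} .
\]
As $r\downarrow0$, this converges uniformly on $\T$ to $a_mz^m/(\sqrt m|a_m|)$, because the tails are $O(r)$ uniformly on $\T$ for $r\leq 1/2$. Thus $|g_r|^2\to 1/m$ uniformly on $\T$, and therefore $F(r)\to e^{1/m}$. Combining this limit with the monotonicity gives \eqref{eq:radial-bound}.
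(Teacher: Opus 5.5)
Your proposal is correct and follows essentially the same route as the paper. You differentiate $r\mapsto J(g_r)$, use Parseval to identify $rD'(r)/D(r)$ with $2\int_\T|g_r'|^2\dd\mu$ (the paper's $2A_r$), and then combine Lemma~\ref{lem:geometric} with Cauchy--Schwarz to get $rF'(r)\leq 0$, with the same uniform-convergence argument giving the limit $e^{1/m}$ as $r\downarrow0$.
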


\begin{proof}
Writing $f(z)=\sum_{n\geq m}a_nz^n$ with $a_m\neq0$, we have
\begin{equation}
    \label{eq:dfr}
    D(f(r\,\cdot)) =\sum_{n\geq m}n|a_n|^2r^{2n} \,.
\end{equation}
This is finite and positive for every $0<r<1$. For each $0<r<1$, the function $g_r$ defined by
\begin{equation}\label{eq:normalization}
 g_r(z):=\frac{f(rz)}{\sqrt{D(f(r\,\cdot))}}
\end{equation}
is analytic in a neighborhood of $\overline\D$, satisfies $g_r(0)=0$, and has $D(g_r)=1$. 

We compute, using Parseval's identity,
\[
A_r:=\int_\T |g_r'(e^{i\theta})|^2 \,\frac{\dd\theta}{2\pi} = \frac{\sum_{n\geq m}n^2|a_n|^2r^{2n}}{D(f(r\,\cdot))} \,.
\]
This, together with \eqref{eq:dfr}, gives
\[
r\frac{\mathrm d}{\mathrm dr}D(f(r\,\cdot)) = 2 \, D(f(r\,\cdot))\, A_r \,.
\]
As a consequence, differentiating \eqref{eq:normalization}, we obtain
\begin{equation}\label{eq:radial-equation}
 r\partial_rg_r=zg_r'-A_rg_r \,,
\end{equation}
where $g_r'$ denotes differentiation with respect to $z$. To compute the $r$-derivative of $J(g_r)$ we differentiate under the integral sign, which is justified by analyticity. We obtain
\begin{align}
 r\frac{\mathrm d}{\mathrm dr}J(g_r)
 &=2\int_\T e^{|g_r|^2}
   \Rea\bigl(\overline{g_r}(zg_r'-A_rg_r)\bigr) \dd\mu \,. \label{eq:monotonicity}
\end{align}
It follows from Lemma~\ref{lem:geometric}, applied to $g_r$, together with the Cauchy--Schwarz inequality, that
\begin{align}
    \label{eq:monotonicitycs}
    \int_\T e^{|g_r|^2}\Rea(zg_r'\overline{g_r}) \dd\mu \leq
    \left(\int_\T e^{|g_r|^2/2} |g_r'||g_r| \dd\mu \right)^2 \leq A_r \int_\T e^{|g_r|^2} |g_r|^2 \dd\mu \,.
\end{align}
Thus the right side of \eqref{eq:monotonicity} is nonpositive, proving the claimed monotonicity.

Finally, uniformly on $\overline\D$,
\[
 g_r(z)\longrightarrow\frac{a_m}{|a_m|}\frac{z^m}{\sqrt m}
 \qquad\text{as }r\downarrow0.
\]
It follows that $\lim_{r\downarrow0}J(g_r)=e^{1/m}$.
\end{proof}

The following result is needed when classifying the equality cases in Theorem \ref{thm:main}.

\begin{proposition}\label{prop:monotonicitysharp}
    In the setting of Proposition \ref{prop:monotonicity}, if equality holds in \eqref{eq:radial-bound} for some $r = r_0 \in(0,1)$, then $f(z) = cz^m$ for some $c\in\C\setminus\{0\}$.    
\end{proposition}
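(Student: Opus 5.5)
If equality holds at $r_0$, then the function $r\mapsto J(g_r)$ is nonincreasing on $(0,1)$ and tends to $e^{1/m}$ as $r\downarrow0$. It equals $e^{1/m}$ at $r_0$, so it is constant on $(0,r_0]$. Since $r\mapsto J(g_r)$ is differentiable (even real analytic) on $(0,1)$, its derivative vanishes for every $r\in(0,r_0)$. The derivative is given by \eqref{eq:monotonicity}. Hence for each such $r$ both inequalities in \eqref{eq:monotonicitycs} must be equalities, and so must the inequalities in the proof of Lemma~\ref{lem:geometric} applied to $g_r$. I will extract rigidity from the Cauchy--Schwarz step. This is the cheapest route and avoids any analysis of equality in the isoperimetric inequality.

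The Cauchy--Schwarz step in \eqref{eq:monotonicitycs} is
\[
\Bigl(\int_\T e^{|g_r|^2/2}|g_r'||g_r|\dd\mu\Bigr)^2\le \int_\T|g_r'|^2\dd\mu\int_\T e^{|g_r|^2}|g_r|^2\dd\mu .
\]
Equality forces $|g_r'|=\lambda\, e^{|g_r|^2/2}|g_r|$ $\mu$-a.e.\ on $\T$ for some constant $\lambda\ge0$; the degenerate case where one factor vanishes is excluded since $g_r\not\equiv0$. Equivalently, $|f'(rz)|=c_r\, e^{|f(rz)|^2/(2D_r)}|f(rz)|$ on $|z|=1$, where $D_r=D(f(r\,\cdot))$. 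Working near $r\downarrow0$ with expansions alone looks messy. Instead I will combine this with the equality in the first inequality of \eqref{eq:monotonicitycs}, i.e.\ $\mathcal A_\gamma=L_\gamma^2/(4\pi)$ for the curve $\gamma$ built from $g_r$. In the Wirtinger step of Lemma~\ref{lem:iso}, equality forces the arclength parametrization of the recentered curve to be $\eta(t)=a\,e^{2\pi i t/L}$, so $\gamma$ is a circle traversed once, with constant speed in arclength. Expressing this through \eqref{eq:curve}, the point $\gamma(\theta)=\exp(|w|^2/2+i\phi)$ moves on a circle centered at some $p$, and the Cauchy--Schwarz equality says $|w'|\propto|\gamma'|/|w|\cdot\ldots$, which yields $|w'|/(e^{|w|^2/2}|w|)$ constant.

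The main obstacle is deducing from these two conditions that $|g_r|$ is constant on $\T$. I would first try to show that the circle must be centered at the origin, i.e.\ $p=0$. Suppose instead $p\neq0$. Then $\log|\gamma|=|w|^2/2$ would be nonconstant, and I would derive a contradiction with $\phi(2\pi)=2\pi$ together with the constant-ratio condition. An alternative that avoids the center question is to use that these equalities hold for a whole interval of $r$ and let $r\downarrow0$. Write $f=a_mz^m+a_kz^k+\dots$, with $a_k$ the first nonzero coefficient after $a_m$, and expand the identity $|g_r'|^2=\lambda_r^2e^{|g_r|^2}|g_r|^2$ on $\T$ to first order in $r^{k-m}$. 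The term $\Rea(a_m\bar a_k z^{k-m})$ must match on both sides, and its coefficients are $2mk$ on the left versus $2m^2+O(\cdot)$ on the right. Since these are unequal for $k\ne m$, this forces $a_k=0$, a contradiction.

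Once $|g_r|$ is constant on $\T$ for some $r$, the function $g_r$ is analytic near $\overline\D$ with constant modulus on $\T$ and a zero of order $m$ at the origin. It is therefore a finite Blaschke product times a constant. The constancy of $|g_r'|$ on $\T$, which follows from the Cauchy--Schwarz equality, forces all zeros to lie at $0$. Hence $g_r=c'z^m$, so $f(z)=cz^m$.
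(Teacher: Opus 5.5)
You correctly reduce to the pointwise identity $|g_r'|^2=\lambda_r^2e^{|g_r|^2}|g_r|^2$ on $\T$ for all $r\in(0,r_0)$, which is also the paper's starting point. The gap is in extracting rigidity from it: your first-order expansion drops the first-order variation of the factor $e^{|g_r|^2}$. With $b:=a_k/a_m$ one has on $\T$
$|g_r|^2=\frac1m\bigl(1+2r^{k-m}\Rea(bz^{k-m})\bigr)+o(r^{k-m})$ and $|g_r'|^2=\frac1m\bigl(m^2+2mk\,r^{k-m}\Rea(bz^{k-m})\bigr)+o(r^{k-m})$. After matching constant terms, the coefficient of $r^{k-m}\Rea(bz^{k-m})$ on the right is $2m^2+2m$ in your normalization, not $2m^2$; the extra $2m$ comes from $e^{|g_r|^2}$. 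The left side gives $2mk$, and the two coincide exactly when $k=m+1$. So your argument only excludes a first nonzero coefficient at $k\ge m+2$. In the generic case $a_{m+1}\neq0$, the order-$r$ part of the identity holds for every $f$: in the paper's notation it reads $a_{m+1}=m\ell_1a_m$, which is just the definition of $\ell_1$. Hence no contradiction arises at that order. Your other route, via equality in the isoperimetric inequality, is only announced; you give no mechanism forcing the circle to be centered at $0$. So the premise of your Blaschke-product step, that $|g_r|$ is constant on $\T$, is never established.

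The missing idea is to use the identity at all orders in $r$, not just the leading one. The paper divides by $|g_r|^2$, takes logarithms using $T=zf'/f$, and compares the $e^{ik\theta}$ Fourier coefficients exactly. This gives $\ell_k\,D(f(r\,\cdot))=\sum_{n\ge m}a_{n+k}\overline{a_n}r^{2n}$ for all small $r$. Comparing power series in $r^2$ then yields $a_{n+k}\overline{a_n}=n\ell_k|a_n|^2$ for all $n\ge m$ and $k\ge1$. Computing $a_{m+3j}$ in two ways gives $(m+j)^2=(m+j)(m+2j)$, a contradiction. If you prefer to keep a perturbative expansion, you must go further. Suppose $a_{m+1}\neq0$, and express $\ell_1,\ell_2$ through $a_m,a_{m+1},a_{m+2}$ via the Taylor expansion of $\log(T/m)$. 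The relation with $(n,k)=(m,2)$ forces $a_{m+2}/a_m=\bigl(1+\frac1{2m}\bigr)(a_{m+1}/a_m)^2$. The relation with $(n,k)=(m+1,1)$ forces $a_{m+2}/a_m=\bigl(1+\frac1m\bigr)(a_{m+1}/a_m)^2$. These are incompatible unless $a_{m+1}=0$.
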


\begin{proof}
    We consider again $g_r$, defined by \eqref{eq:normalization}. By Proposition \ref{prop:monotonicity}, the assumed equality implies that
\[
 J(g_r)=e^{1/m}
 \qquad\text{for all}\ 0<r\leq r_0 \,.
\]
    Vanishing of the left side of \eqref{eq:monotonicity} at any $r\in(0,r_0)$ implies equality in the Cauchy--Schwarz inequality in \eqref{eq:monotonicitycs} and therefore gives a constant $C_r>0$ such that
\begin{equation}\label{eq:CS-equality}
 |g_r'|^2=C_r|g_r|^2e^{|g_r|^2}\qquad\text{on }\T.
\end{equation}
The equality holds pointwise because both sides are continuous.

Note that $zf'(z)/f(z)$ tends to $m\neq 0$ as $|z|\to 0$. (Indeed, this follows by writing $f(z)=z^mu(z)$, where $u$ is analytic and $u(0)=a_m\neq0$, and noting that $zf'(z)/f(z) = m + zu'(z)/u(z)$. Consequently, there is a $\rho>0$ such that the function
\[
    T(z):=\frac{zf'(z)}{f(z)}
\]
is analytic and nonzero for $|z|<\rho$. Choose the analytic logarithm with value zero at the origin and write
\begin{equation}\label{eq:log-T}
 \log\frac{T(z)}m=\sum_{k\geq1}\ell_kz^k.
\end{equation}
For $0<r<\min\{r_0,\rho\}$, division of \eqref{eq:CS-equality} by $|g_r|^2$ and use of \eqref{eq:normalization} yield
\begin{equation}\label{eq:log-equality}
 \log|T(re^{i\theta})|^2
 =\log C_r+\frac{|f(re^{i\theta})|^2}{D(f(r\,\cdot))} \,.
\end{equation}
There are no zeros on these sufficiently small circles, so all divisions are legitimate.

Writing
\[
\log|T(re^{i\theta})|^2
= 2\log m+
2\operatorname{Re}\log\frac{T(re^{i\theta})}{m}
=2\log m+\sum_{k\ge1}
\left(\ell_kr^ke^{ik\theta}
+\overline{\ell_k}r^ke^{-ik\theta}\right),
\]
we see that the coefficient of $e^{ik\theta}$ on the left side of \eqref{eq:log-equality} is $\ell_kr^k$. On the right side, we write again $f(z) = \sum_{n\geq m} a_n z^n$ with $a_m\neq 0$ and find that the coefficient of $e^{ik\theta}$ for $k\geq1$ is
\[
 \frac1{D(f(r\,\cdot))}\sum_{n\geq m}a_{n+k}\overline{a_n}\,r^{2n+k} \,.
\]
Thus,
\[
\ell_k \, D(f(r\,\cdot)) = \sum_{n\geq m}a_{n+k}\overline{a_n}\,r^{2n} \,.
\]
Inserting the power series expansion for $D(f(r\,\cdot))$ from \eqref{eq:dfr} and comparing coefficients, we find that
\begin{equation}\label{eq:coefficient-relation}
 a_{n+k}\overline{a_n}
 =n\ell_k|a_n|^2
 \qquad\text{for all}\ n\geq m\ \text{and}\ k\geq1 \,.
\end{equation}
Equivalently, whenever $a_n\neq0$,
\begin{equation}\label{eq:recurrence}
 a_{n+k}=n\ell_ka_n.
\end{equation}

We argue by contradiction and assume that $a_{m+j}\neq0$ for some $j\geq1$. Taking $n=m$ and $k=j$ in \eqref{eq:recurrence} shows that $\ell_j\neq0$. Applying the same relation successively gives
\[
 a_{m+j}=m\ell_ja_m,
 \qquad
 a_{m+2j}=m(m+j)\ell_j^2a_m\neq0 \,.
\]
On the other hand, the relation with $n=m$ and $k=2j$ gives
\[
 \ell_{2j}=(m+j)\ell_j^2 \,.
\]
We can now compute $a_{m+3j}$ in two ways: with $(n,k)=(m+j,2j)$ and with $(n,k)=(m+2j,j)$. These give, respectively,
\[
 a_{m+3j}=m(m+j)^2\ell_j^3a_m,
 \qquad
 a_{m+3j}=m(m+2j)(m+j)\ell_j^3a_m.
\]
Since $j\geq1$ and $\ell_ja_m\neq0$, this is a contradiction. Hence $a_n=0$ for every $n>m$, and $f$ is a monomial of degree $m$, as claimed.
\end{proof}


\section{Proof of Theorem \ref{thm:main}}\label{sec:bound}

We now prove the inequality in Theorem~\ref{thm:main}. First suppose that $D(f)=1$. Then, by \eqref{eq:dfr}, $f$ belongs to the Hardy space and, in particular, it has radial limits almost everywhere on $\T$. Moreover, the same equation implies that $D(f(r\,\cdot)\to 1$ as $r\uparrow1$ and therefore $g_r$, defined again by \eqref{eq:normalization}, satisfies $g_r\to f$ almost everywhere on $\T$. By Fatou's lemma and Proposition~\ref{prop:monotonicity},
\begin{equation}\label{eq:fatou}
 J(f)\leq\liminf_{r\uparrow1}J(g_r)\leq e^{1/m} \,.
\end{equation}
This proves the claimed bound for $D(f)=1$.

For $0<D(f)<1$, apply \eqref{eq:fatou} to $\tilde f:=f/\sqrt{D(f)}$. H\"older's inequality gives
\begin{equation}\label{eq:jensen}
 J(f)=\int_\T\bigl(e^{|\tilde f|^2}\bigr)^{D(f)} \dd\mu
 \leq \left(\int_\T e^{|\tilde f|^2} \dd\mu \right)^{D(f)}
 \leq e^{D(f)/m}.
\end{equation}
This proves \eqref{eq:main}. If $f(z)=cz^m$ with $c\in\C$, then $D(f)=m|c|^2$ and $J(f)=e^{|c|^2}$, so equality is attained, and the assumption $D(f)\leq 1$ is satisfied if and only if $|c|\leq1/\sqrt{m}$. It remains to show that there are no other equality cases.

Suppose first that $D(f)=1$ and $J(f)=e^{1/m}$. By Proposition \ref{prop:monotonicity}, the function $r\mapsto J(g_r)$ is nonincreasing and its limit at zero is $e^{1/m}$. Therefore, \eqref{eq:fatou} implies that it is constant on $(0,1)$. Therefore, by Proposition \ref{prop:monotonicitysharp}, $f(z) = cz^m$ for some $c\in\C\setminus\{0\}$. The assumption $D(f)=1$ implies that $|c|=1/\sqrt m$.

Finally, if $0<D(f)<1$ and equality holds in \eqref{eq:main}, both inequalities in \eqref{eq:jensen} have to be equalities. In particular, $J(f/\sqrt{D(f)})=e^{1/m}$, so the result just proved shows that $f(z)/\sqrt{D(f)}=c z^m$ for some $c\in\C\setminus\{0\}$. This completes the proof of Theorem~\ref{thm:main}.


\section*{Acknowledgments}
R.L.F.~acknowledges partial support through the German Research Foundation grants EXC-2111-390814868 and TRR 352--Project-ID 470903074. P.I.~acknowledges partial support from the US NSF CAREER grant DMS-2152401, US NSF grant DMS-2554183, a Simons Fellowship, and a Humboldt Research Fellowship for Experienced Researchers. The authors acknowledge the use of AI tools. All mathematical arguments and proofs in the final manuscript were checked and written by the authors.


\end{document}